\documentclass[reqno]{article}
\usepackage{amsmath,amsfonts,amssymb,amsthm,graphics,wrapfig}

% typsetter please delete these! (They were there to make amount on a page
% similar to RSA style)

\addtolength{\oddsidemargin}{-.25in}
\addtolength{\evensidemargin}{-.25in}
\addtolength{\textwidth}{0.5in}
\addtolength{\topmargin}{-0.25in}
\addtolength{\textheight}{0.5in}

\newtheorem{theorem}{Theorem}
\newtheorem{lemma}[theorem]{Lemma}
\newtheorem{corollary}[theorem]{Corollary}

\newcommand\eps{\varepsilon}
\renewcommand\le{\leqslant}
\renewcommand\ge{\geqslant}
\newcommand\floor[1]{\lfloor #1 \rfloor}
\newcommand\ceil[1]{\lceil #1 \rceil}

\newcommand\cS{{\mathcal S}}

\begin{document}
\title{Long cycles in random subgraphs of graphs with large minimum degree}
\author{Oliver Riordan%
\thanks{Mathematical Institute, University of Oxford, Radcliffe Observatory Quarter, Woodstock Road, Oxford OX2 6GG, UK.
E-mail: {\tt riordan@maths.ox.ac.uk}.}}
\date{August 16, 2013; revised May 24, 2014}
\maketitle

\begin{abstract}
Let $G$ be any graph of minimum degree at least $k$, and let $G_p$ be the random subgraph of $G$
obtained by keeping each edge independently with probability $p$.
Recently, Krivelevich, Lee and Sudakov showed that if $pk\to\infty$ then with probability
tending to 1 $G_p$ contains a cycle of length at least $(1-o(1))k$. We give a much shorter
proof of this result, also based on depth-first search.
\end{abstract}

Let $(G_k)_{k=1}^\infty$ be a sequence of graphs
where $G_k$ has minimum degree at least $k$ (we make no assumption
on the number of vertices of $G_k$), and let $0\le p=p(k)\le 1$.
Let $G_k[p]$ be the random spanning subgraph of $G_k$ obtained
by retaining each edge with probability $p$, independently of the others.
(Thus, if $G_k$ is complete, then $G_k[p]$ is the classical binomial random graph.)
Recently, Krivelevich, Lee and Sudakov~\cite{KLS} initiated the study of this very general model,
asking which results for classical random graphs have analogues in this setting.
As they point out, many questions do not make much sense in this generality -- for example,
no condition on $p$ can ever ensure that $G_k[p]$ is likely to contain a short cycle,
since the graphs $G_k$ may have large girth. However, some do, and indeed have positive
answers.

One of the main results in~\cite{KLS} is the following; here and later we often
suppress the dependence on $k$ in the notation. An event holds \emph{with high probability}
or \emph{whp} if its probability tends to $1$ as $k\to\infty$.

\begin{theorem}\label{th1}
Let $G_k[p]$ be defined as above, and suppose that $pk\to\infty$ as $k\to\infty$. Then whp
$G_k[p]$ contains a cycle of length at least $(1-o(1))k$.
\end{theorem}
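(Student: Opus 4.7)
My approach is to run depth-first search (DFS) on $G$ while exposing the edges of $G_p$ lazily, first producing a long path in $G_p$ and then closing it into a cycle via sprinkling. Throughout the DFS maintain the partition $V(G)=S\cup T\cup U$, where $S$ consists of the already-completed vertices, $T$ is the current stack (a path in $G_p$), and $U$ the unvisited set. Whenever the top-of-stack vertex $v$ still has an unqueried $G$-edge to some $u\in U$, query the next such edge using an independent Bernoulli$(p)$ coin: on heads push $u$ onto the stack, on tails discard the edge. When $v$ has no unqueried $G$-edge into the current $U$, pop it into $S$. Each $G_p$-edge is exposed at most once, all coins used are i.i.d., and the standard invariant that no $G_p$-edge joins $S$ to $U$ is preserved.

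\textbf{Step 1: a long path.} Fix $\eps=\eps(k)\to 0$ slowly enough that $\eps pk\to\infty$ (for instance $\eps=(pk)^{-1/2}$), and set $L=\lceil(1-\eps)k\rceil$. I would show that whp $|T|\ge L$ at some moment of the DFS. Suppose not. When a vertex $v$ is popped, its $\ge k$ $G$-neighbours lie entirely in $S_v\cup(T_v\setminus\{v\})$, so at least $k-|S_v|-|T_v|+1$ of the coins flipped while $v$ was at the top of the stack came up tails. Restarting DFS from an unvisited root whenever a $G_p$-component is exhausted and summing over the first $\ell=\eps k/2$ popped vertices, the number of tails is at least $\tfrac14\eps^2k^2$ while the number of heads is at most $\ell$. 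In an i.i.d.\ Bernoulli$(p)$ sequence, the number of tails preceding the $\ell$-th head is whp $O(\ell/p)=O(\eps k/p)$ by a standard negative-binomial Chernoff estimate, and the assumption $\eps pk\to\infty$ makes this $o(\eps^2k^2)$: contradiction.

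\textbf{Step 2: a long cycle.} A long path in $G_p$ is not immediately a long cycle: closing it requires a $G_p$-chord from an endpoint back to an early vertex of the path, independent of the edges that built the path. I would obtain this by the standard sprinkling identity $1-p=(1-p_1)(1-p_2)$ with $p_1k,p_2k\to\infty$, so that $G_p$ has the distribution of $G_{p_1}\cup G'_{p_2}$ for independent spanning random subgraphs. Apply Step 1 to $G_{p_1}$ to obtain a path of length $\ge L$ in $G_{p_1}$, and let $P^\ast$ be a longest path in $G_{p_1}\cup G'_{p_2}$, so $|P^\ast|\ge L$. If $v$ is an endpoint of $P^\ast$, then by maximality all $G_p$-neighbours of $v$ lie on $P^\ast$; a short P\'osa rotation argument applied to $P^\ast$ produces many candidate endpoints, and since $G'_{p_2}$ is independent of the construction a routine Chernoff estimate yields a $G'_{p_2}$-chord closing a cycle of length $(1-o(1))k$.

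\textbf{Main obstacle.} Step 1 is a direct counting and Chernoff calculation. The non-trivial part is Step 2 --- upgrading the path to a cycle --- which requires a source of randomness independent of the path-building edges; the sprinkling decomposition is the cleanest way I see to isolate this independent randomness while keeping the argument short.
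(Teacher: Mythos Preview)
Step 1 is essentially right, modulo two small patches: the coins flipped from the first $\ell$ popped vertices do not form a \emph{prefix} of the i.i.d.\ sequence, so the negative-binomial bound must be applied to the full prefix up to the $\ell$-th pop, where total heads are at most $\ell+L\approx k$ (pushes minus roots), not $\ell$; with that correction the required inequality is $\eps^2 pk\to\infty$, so take, say, $\eps=(pk)^{-1/3}$ rather than $(pk)^{-1/2}$.

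Step 2, however, has a genuine gap, and this is exactly where the real work in the paper lies. As you wrote it, $P^\ast$ is a longest path in $G_{p_1}\cup G'_{p_2}$, so it already depends on $G'_{p_2}$ and you cannot afterwards expose $G'_{p_2}$-edges from its endpoint as fresh randomness. If instead $P^\ast$ is taken longest in $G_{p_1}$ alone, maximality only forces the $G_{p_1}$-neighbours of the endpoint $v$ onto $P^\ast$; the $G$-neighbours of $v$ (and hence its $G'_{p_2}$-neighbours) may lie entirely off $P^\ast$, so a sprinkled edge may extend the path rather than close a cycle. P\'osa rotation does not rescue this: the rotation endpoints $R$ satisfy an expansion-type bound in $G_{p_1}$, but $G_{p_1}$ has no minimum-degree or expansion guarantee (recall $|V(G)|$ is unbounded, so no union bound yields $\delta(G_{p_1})\approx pk$), there is no reason for $|R|$ to be large, and nothing forces the $G$-edges out of $R$ to land near the far end of $P^\ast$. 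In short, a long path in $G_p$ does not by itself yield a long cycle in this generality.

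The paper closes the cycle by a different mechanism. It runs the DFS to completion, obtaining a spanning forest $T$ and a set of \emph{untested} edges which are still present independently with probability $p$ and, crucially, always join a vertex to one of its ancestors in $T$. A counting argument shows all but $o(n)$ vertices have $(1-\eps)k$ untested incident edges, hence that many chords on vertical paths; a combinatorial lemma then locates a single vertical path of length $\eps^{-2}k$ on which almost every vertex has an untested upward chord of length between $\eps k$ and $(1-5\eps)k$, and a chain of such overlapping chords (each present whp, since untested) is stitched into a cycle of length at least $(1-O(\eps))k$. The untested edges supply the independent randomness you were seeking from sprinkling, but with the decisive extra feature that they are guaranteed to land back on the path.
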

The aim of this note is to give a short proof of this result, based
on the same basic starting point as in~\cite{KLS}, but (sadly!) using relatively simple deterministic arguments
to avoid most of the complicated probabilistic reasoning used there. Of course, the arguments
in~\cite{KLS} give significant structural information about $G_k[p]$ that cannot be deduced
from the simple proof given here.

Following~\cite{KLS}, the first step is to explore $G_p=G_k[p]$ by depth-first search,
revealing a {\em rooted spanning forest} $T$ of $G_p$. In other words, $T$ will be the disjoint
union of one or more rooted trees, one spanning each component of $G_p$. For completeness, we
describe the algorithm and its key implication, although this is of course discussed
in detail in~\cite{KLS}.

The search algorithm maintains a \emph{stack} of vertices. Initially the stack is empty,
and all vertices are \emph{unreached}. Each vertex $v$ will be `reached' at some point, and placed
onto the stack. At some later point $v$ will be removed from the stack (never to return)
and will be declared \emph{explored}. More precisely, the algorithm executes the following
loop, until every vertex is explored:
 
\smallskip
If the stack is empty, pick any unreached vertex $v$ and place it onto the stack. $v$ will be the
root of a new component of $T$.

Otherwise, let $u$ be the top vertex on the stack. If there are any
not-yet-tested edges of $G=G_k$ from $u$
to unreached vertices $w$, pick one such edge, and test whether it is present
in $G_p$. If so, add $w$ to the top of the stack, and add $uw$ to $T$. If there are no
not-yet-tested edges of $G$ from $u$ to unreached vertices, remove $u$ from the stack and label it as explored.

\smallskip
The key point is that if we find a neighbour $w$ of $u$, then we postpone looking for further
neighbours of $u$ until we have checked for neighbours of $w$, and so on. A consequence
is that the vertices on the stack always represent a path in $T$.
Let us call a path in a rooted tree \emph{vertical} if it does not contain the root
as an interior vertex. Let $U$ be the set of edges of $G_k$ not tested during the exploration.
The following two lemmas are the key starting points for the arguments of
Krivelevich, Lee and Sudakov~\cite{KLS}. We include proofs for completeness.

\begin{lemma}\label{lpath}
Every edge $e$ of $U$ joins two vertices on some vertical path in $T$.
\end{lemma}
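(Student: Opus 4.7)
The plan is to show that for any untested edge $uv \in U$, there is a moment during the execution of the DFS at which both $u$ and $v$ sit on the stack simultaneously. Since the stack, at every moment, forms a vertical path in the current forest, and $T$ only grows (edges are never removed), this will place $u$ and $v$ on a common vertical path in the final $T$.

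First I would fix an edge $uv \in U$ and assume without loss of generality that $u$ is reached before $v$. The key intermediate claim is that $u$ is still on the stack at the moment $v$ is first placed on the stack. To prove this, I would argue by contradiction: if $u$ had been popped (declared explored) strictly before $v$ was reached, then at the moment $u$ was popped the algorithm checked that there are no untested edges from $u$ to unreached vertices. Since $v$ was still unreached and $uv$ is by assumption untested, this yields the required contradiction.

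Once both $u$ and $v$ are on the stack at the moment $v$ is placed there, the conclusion is immediate: the vertices on the stack at any instant form a path in $T$ that runs from the root of the current tree (at the bottom) up to the most recently added vertex (at the top), and such a path is vertical by definition. Since edges are only added to $T$ during the DFS, this path remains a vertical path of the final forest, and both endpoints of $e$ lie on it.

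The only real content is the intermediate claim; once it is established, the rest is just unpacking the invariant that the stack is a vertical path, which follows directly from how the algorithm grows and shrinks the stack. I do not anticipate any obstacle beyond being careful about the ``WLOG'' step and the precise moment at which the two endpoints coexist on the stack.
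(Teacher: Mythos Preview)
Your argument is correct and is essentially the same as the paper's. The only cosmetic difference is that the paper orders the endpoints by which is \emph{explored} first and examines the moment that vertex leaves the stack, whereas you order by which is \emph{reached} first and examine the moment the other vertex enters the stack; in both cases one deduces that the two endpoints are simultaneously on the stack, hence on a common vertical path.
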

\begin{proof}
Let $e=uv$ and suppose that $u$ was explored (left the stack) before $v$.
When $u$ left the stack, $v$ cannot have been unreached, or $uv$ would
have been tested. Also, $v$ cannot have been explored, by choice of $u$. So $v$ was on the stack,
and the stack from $v$ to $u$ forms the required vertical path.
\end{proof}

Suppressing the dependence on $k$, from now on we write $n=n(k)=|G_k|$ for the
number of vertices of $G_k$.

\begin{lemma}\label{llittle}
With high probability, at most $2n/p=o(kn)$ edges are tested during the exploration above.
\end{lemma}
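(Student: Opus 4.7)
The plan is to couple the edge tests with an i.i.d.\ Bernoulli sequence and estimate by Chernoff. By a standard deferred-decisions argument, the outcomes of the tests in the order they are performed form an i.i.d.\ Bernoulli$(p)$ sequence $\xi_1,\xi_2,\ldots$, with $\xi_i=1$ iff the $i$-th tested edge lies in $G_p$: since each edge is tested at most once, conditional on the history the next test is a fresh Bernoulli$(p)$ trial, independent of everything revealed before.

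The key deterministic observation is that the edges tested and found to be present are precisely the edges added to the spanning forest $T$. Since $T$ has at most $n-1$ edges, letting $X$ denote the (random) total number of tests performed,
\[
\sum_{i=1}^{X}\xi_i \;=\; |E(T)| \;\le\; n-1.
\]
Hence, writing $N=\lceil 2n/p\rceil$, on the event $\{X>N\}$ we have $\sum_{i=1}^{N}\xi_i \le n-1 < n$, so
\[
\mathbb{P}\bigl(X > 2n/p\bigr)\;\le\;\mathbb{P}\bigl(\mathrm{Bin}(N,p)<n\bigr).
\]
Since $Np\ge 2n$ while $n\ge k+1\to\infty$ (from $pk\to\infty$ we certainly have $k\to\infty$), the mean $Np$ is at least twice the deviation threshold $n$, and a standard Chernoff bound forces the right-hand side to be $o(1)$.

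Finally, $2n/p = o(kn)$ is simply a rewriting of the hypothesis $pk\to\infty$. I do not anticipate any real obstacle in executing this plan: the only substantive ingredient is the deferred-decisions setup plus the observation that the ``successes'' in the test sequence are capped by $|E(T)|\le n-1$, after which everything reduces to a one-line Chernoff estimate.
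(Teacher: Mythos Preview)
Your proposal is correct and follows essentially the same approach as the paper's proof: both couple the sequence of edge tests with i.i.d.\ Bernoulli$(p)$ trials via deferred decisions, observe that each success adds an edge to the forest $T$ so there are at most $n-1$ successes, and conclude by comparison with a binomial (the paper phrases this as ``comparison with a binomial distribution'' and notes the bound is exponentially small in $n\ge k$, while you spell out the Chernoff estimate explicitly). The only cosmetic difference is the level of detail.
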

\begin{proof}
Each time an edge is tested, the test succeeds (the edge is found to be present) with probability
$p$. Comparison with a binomial distribution implies that the probability
that more than $2n/p$ tests are made but fewer than $n$ succeed is $o(1)$ (in fact, exponentially small
in $n\ge k$).
But every successful test contributes an edge to the forest $T$, so at most $n-1$ tests are successful.
\end{proof}

From now on let us fix an arbitrary (small) constant $0<\eps<1/10$.
We call a vertex $v$ \emph{full} if it is incident with at least $(1-\eps)k$ edges in $U$.
\begin{corollary}\label{cfull}
With high probability, all but $o(n)$ vertices of $T$ are full.
\end{corollary}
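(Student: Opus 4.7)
The plan is a direct double-counting argument fed by Lemma \ref{llittle}. Since $G_k$ has minimum degree at least $k$, every vertex $v$ is incident in $G_k$ to at least $k$ edges, each of which is either in $U$ or was tested during the exploration. So if $v$ is \emph{not} full, then $v$ is incident with more than $k-(1-\eps)k = \eps k$ tested edges.

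Let $X$ be the number of tested edges, and let $Y$ be the number of non-full vertices. Counting pairs (non-full vertex, incident tested edge) in two ways gives $\eps k \cdot Y < 2X$, since each tested edge has at most two endpoints. By Lemma \ref{llittle}, whp we have $X \le 2n/p$, so whp
\[
Y < \frac{2X}{\eps k} \le \frac{4n}{\eps k p}.
\]
Because $\eps$ is a fixed constant and $pk \to \infty$, the right-hand side is $o(n)$, which is the desired bound (recall that $T$ is spanning, so its vertex set is all $n$ vertices of $G_k$).

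I expect no real obstacle: the hypothesis $pk \to \infty$ is exactly what is needed to make the bound $4n/(\eps k p)$ lose a factor tending to infinity relative to $n$, and the minimum-degree hypothesis is what lets us convert "not full" into a lower bound of $\eps k$ on the number of tested edges at $v$. The only mild care is to note that each tested edge is counted at most twice (contributing the factor $2$), and that the conclusion of Lemma \ref{llittle} is a whp statement, so the resulting bound on $Y$ also holds whp rather than deterministically.
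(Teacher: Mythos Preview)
Your proof is correct and is essentially the same argument as the paper's: both observe that a non-full vertex is incident with at least $\eps k$ tested edges, double-count vertex--edge incidences, and invoke Lemma~\ref{llittle} to bound the number of tested edges by $2n/p=o(kn)$. The only cosmetic difference is that the paper phrases the counting contrapositively (``if at least $cn$ vertices are not full then at least $c\eps kn/2$ edges were tested''), whereas you solve directly for $Y$.
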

\begin{proof}
Since $G$ has minimum degree at least $k$, each $v\in V(G)=V(T)$
that is not full is incident with at least $\eps k$ tested edges. If for some constant
$c>0$ there are at least $cn$ such vertices, then there are at least $c\eps kn/2$ tested
edges. But the probability of this is $o(1)$ by Lemma~\ref{llittle}.
\end{proof}

To state the new part of the argument, we need a couple of definitions. Let $v$ be a vertex
of a rooted forest $T$. Then there is a unique path from $v$ to the root of its component.
We write $A(v)$ for the set of \emph{ancestors} of $v$, i.e., vertices (excluding $v$) on this path.
We write $D(v)$ for the set of \emph{descendants} of $v$, again excluding $v$. Thus $w\in D(v)$ if and only
if $v\in A(w)$. The \emph{distance} $d(u,v)$ between two vertices $u$ and $v$ on a common vertical path
is just their graph distance along this path. We write $A_i(v)$ and $D_i(v)$ for the set
of ancestors/descendants of $v$ at distance exactly $i$, and $A_{\le i}(v)$, $D_{\le i}(v)$
for those at distance at most $i$. The \emph{height} of a vertex $v$ is $\max\{i: D_i(v)\ne\emptyset\}$.
 
Let us call a vertex $v$ \emph{rich} if $|D(v)|\ge \eps k$, and \emph{poor} otherwise.
In the next two lemmas,
$(T_k)_{k=1}^\infty$ is a sequence of rooted forests. We suppress the dependence on $k$ in the notation,
and write $n$ for $|T_k|$.

\begin{lemma}\label{lpoor}
Suppose that $T$ contains $o(n)$ poor vertices. Then, for any constant $C$, all but $o(n)$ vertices of $T$
are at height at least $Ck$.
\end{lemma}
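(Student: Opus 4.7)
The plan is to control $L_h := |\{v \in T : \mathrm{height}(v) \le h\}|$ by a simple recurrence in $h$, and to show $L_{\ceil{Ck}} = o(n)$. Two structural observations about the rooted forest $T$ drive the argument. First, if $u$ is the parent of $v$ then the downward path from $v$ to its deepest descendant extends through $u$, so $\mathrm{height}(u) \ge \mathrm{height}(v) + 1$; consequently heights strictly increase along every rootward path, and so each vertex has \emph{at most one} ancestor of any given height. Second, if $v$ has height $h$ and $w \in D(v)$ then $\mathrm{height}(w) \le h - 1$, since the longest downward path from $w$ has length at most $h - d(v,w) \le h-1$.

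Write $R_h$ and $P_h$ for the numbers of rich and poor vertices of height exactly $h$. For $h \ge 1$, double-count the pairs $(v,w)$ with $v$ rich of height $h$ and $w \in D(v)$. By definition of rich there are at least $\eps k \cdot R_h$ such pairs; by the two observations above every such $w$ has height $\le h-1$ and has at most one ancestor of height exactly $h$, so the number of pairs is also at most $L_{h-1}$. Hence $R_h \le L_{h-1}/(\eps k)$, and therefore
\[
L_h \;=\; L_{h-1} + R_h + P_h \;\le\; \left(1 + \frac{1}{\eps k}\right) L_{h-1} + P_h .
\]
A leaf has $D(v) = \emptyset$ and so is poor (once $\eps k > 0$), which handles the base case $L_0 \le P_0$. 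Iterating gives
\[
L_{\ceil{Ck}} \;\le\; \left(1 + \frac{1}{\eps k}\right)^{\ceil{Ck}} \sum_{i=0}^{\ceil{Ck}} P_i \;\le\; e^{C/\eps + o(1)} \cdot o(n) \;=\; o(n),
\]
since $\sum_i P_i$ is at most the total number of poor vertices, which is $o(n)$ by hypothesis, and $C/\eps$ is a constant. Thus all but $o(n)$ vertices have height exceeding $Ck$, as required.

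The proof is essentially mechanical once the two observations are noted. The only step worth flagging as the potential obstacle is the first observation: strict monotonicity of height along rootward paths is precisely what keeps the double-count tight, and correspondingly keeps the per-level factor at $1 + 1/(\eps k)$, which is what allows $(1+1/(\eps k))^{Ck}$ to remain bounded as $k \to \infty$. Without this tightness the geometric blow-up per level would swamp the bound on the total number of poor vertices.
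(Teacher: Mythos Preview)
Your proof is correct. The two observations are sound: if $u$ is the parent of $v$ then any deepest descendant of $v$ is a descendant of $u$ one step further away, giving strict monotonicity of height along rootward paths and hence at most one ancestor per height value; and any descendant of a height-$h$ vertex has height at most $h-1$. The double-count then gives $\eps k\, R_h \le L_{h-1}$, the recurrence $L_h \le (1+1/(\eps k))L_{h-1}+P_h$ follows, and iterating for $\lceil Ck\rceil$ steps yields $L_{\lceil Ck\rceil}\le (1+1/(\eps k))^{\lceil Ck\rceil}\sum_i P_i \le e^{C/\eps+o(1)}\cdot o(n)=o(n)$.

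Your route differs from the paper's. The paper does a single global double-count: for each rich $v$ it picks a set $P(v)$ of $\lceil\eps k\rceil$ \emph{farthest} descendants, observes that every $w\in P(v)$ is automatically poor, and then counts pairs $(v,w)$ with $v$ rich and $w\in P(v)$. There are $(1-o(1))\eps k n$ such pairs; since every second coordinate is poor and each poor $w$ has at most $Ck$ ancestors within distance $Ck$, only $o(kn)$ pairs have $d(v,w)\le Ck$. Hence almost all rich $v$ appear in a pair with $d(v,w)>Ck$, giving height $>Ck$ directly. The paper's trick of choosing $P(v)$ so that its elements are forced to be poor lets it bound the far-pair count in one stroke; your argument instead stratifies by height and propagates a bound level by level via a recurrence. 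Your approach is perhaps more mechanical and avoids the clever choice of $P(v)$, at the cost of a slightly longer computation; the paper's is shorter but relies on spotting that ``farthest $\lceil\eps k\rceil$ descendants'' are all poor.
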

\begin{proof}
For each rich vertex $v$, let $P(v)$ be a set of $\ceil{\eps k}$ descendants of $v$, obtained by choosing
vertices of $D(v)$ one-by-one starting with those furthest from $v$. For every $w\in P(v)$ we have
$D(w)\subsetneq P(v)$, so $|D(w)|<\eps k$, i.e., $w$ is poor. Consider
the set $\cS_1$ of ordered pairs $(v,w)$ with $v$ rich and $w\in P(v)$. Each of the $(1-o(1))n$ rich vertices
appears in at least $\eps k$ pairs, so $|\cS_1|\ge (1-o(1))\eps kn$.

For any vertex $w$ we have $|A_{\le i}(w)|\le i$, since
there is only one ancestor at each distance, until we hit the root.
Since $(v,w)\in \cS_1$ implies that $w$ is poor and $v\in A(w)$, and there are only $o(n)$ poor
vertices, at most $o(Ckn)=o(kn)$ pairs $(v,w)\in \cS_1$ satisfy $d(v,w)\le Ck$.
Thus $\cS_1'=\{(v,w)\in \cS_1: d(v,w)>Ck\}$ satisfies $|\cS_1'|\ge (1-o(1))\eps kn$.
Since each vertex $v$ is the first vertex of at most $\ceil{\eps k}\sim \eps k$ pairs
in $\cS_1\supset \cS_1'$, it follows that $n-o(n)$ vertices $v$ appear in pairs $(v,w)\in \cS_1'$.
Since any such $v$ has height at least $Ck$, the proof is complete.
\end{proof}

Let us call a vertex $v$ \emph{light} if $|D_{\le (1-5\eps)k}(v)|\le (1-4\eps)k$, and \emph{heavy} otherwise.
Let $H$ denote the set of heavy vertices in $T$.

\begin{lemma}\label{lmain}
Suppose that $T=T_k$ contains $o(n)$ poor vertices, and let $X\subset V(T)$ with $|X|=o(n)$.
Then, for $k$ large enough, $T$ contains a vertical path $P$ of length at least $\eps^{-2}k$ containing
at most $\eps^2k$ vertices in $X\cup H$.
\end{lemma}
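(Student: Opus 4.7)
The plan has two parts: first reduce to the situation where most vertices are light and tall, then construct the path by a greedy descent and amortised counting.

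First, apply Lemma~\ref{lpoor} with $C:=\eps^{-2}$ to conclude that all but $o(n)$ vertices of $T$ have height at least $L:=\ceil{\eps^{-2}k}$. Second, bound $|H|$ by the double count
\[
|H|(1-4\eps)k \;<\; \sum_{v\in H}|D_{\le(1-5\eps)k}(v)| \;\le\; \sum_w|A_{\le(1-5\eps)k}(w)| \;\le\; n(1-5\eps)k,
\]
giving $|H|\le n(1-5\eps)/(1-4\eps)\le(1-\eps)n$, so the set $V^*$ of light vertices $v\notin X$ with height $\ge L$ satisfies $|V^*|\ge\eps n - o(n)$.

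For each $v\in V^*$ I descend greedily from $v$, at each step moving to a child not in $X\cup H$ if one exists and otherwise to an arbitrary child. The bad vertices on the resulting descent occur as \emph{entries} at \emph{trapped} vertices --- vertices with at least one child but no child outside $X\cup H$ --- and as \emph{continuations} along the ensuing heavy runs. For light trapped vertices of out-degree at least two, combining the lightness of $u$ with the heaviness of all $m\ge 2$ children forces $|D_{(1-5\eps)k+1}(u)|\ge(1-4\eps)k$, so by the analogue of the double count above their number is $O(n/k)=o(n)$. Since $|X|=o(n)$ as well, averaging over the $|V^*|=\Omega(n)$ choices of starting vertex shows that a typical descent passes through only $o(L)$ entries and $X$-vertices, well under $\eps^2k/2$ for $k$ large.

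The main obstacle is controlling trapped vertices of out-degree one, where greedy has no choice, together with the lengths of the continuations they produce. My plan is to iterate the lightness property in consecutive segments of length $(1-5\eps)k$, each beginning at a light anchor: within one segment the anchor's near-subtree has size at most $(1-4\eps)k$, so the descent can remain in a heavy run only until this near-subtree is exhausted, at which point it must reach a new vertex outside it that is itself light. Amortising over the roughly $\eps^{-2}/(1-5\eps)$ such segments comprising a descent of length $L$, and using $|V^*|=\Omega(n)$ to choose a starting vertex for which this amortisation is favourable, should yield the desired descent with at most $\eps^2k$ vertices in $X\cup H$. This iterated amortisation is the technical crux that I expect to be the hardest step.
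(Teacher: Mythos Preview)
Your double count for $|H|$ is correct but throws away the information that makes the lemma easy. You lower-bound $\sum_{v}|D_{\le(1-5\eps)k}(v)|$ using only the heavy vertices, obtaining merely $|H|\le(1-\eps)n$. But Lemma~\ref{lpoor} (with $C=1$) tells you that all but $o(n)$ vertices $u$ have height at least $k$, and any such $u$ already has $|D_{\le(1-5\eps)k}(u)|\ge(1-5\eps)k$. Feeding this into the \emph{same} double count gives
\[
(1-5\eps)kn \;\ge\; \sum_{u}|D_{\le(1-5\eps)k}(u)| \;\ge\; |H|\,(1-4\eps)k + \bigl(n-|H|-o(n)\bigr)(1-5\eps)k,
\]
which rearranges to $|H|\,\eps k\le o(kn)$, i.e.\ $|H|=o(n)$. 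This is the missing idea.

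With $|H|=o(n)$ in hand, the greedy descent and the ``iterated amortisation'' become unnecessary. Since $|X\cup H|=o(n)$, the number of pairs $(u,w)$ with $w\in X\cup H$, $u\in A(w)$ and $d(u,w)\le\eps^{-2}k$ is at most $\eps^{-2}k\,|X\cup H|=o(kn)$; hence only $o(n)$ vertices $u$ have more than $\eps^2k$ vertices of $X\cup H$ inside $D_{\le\eps^{-2}k}(u)$. Choose any $u$ outside this exceptional set that also has height at least $\eps^{-2}k$ (again Lemma~\ref{lpoor}). Then \emph{every} vertical path of length $\ceil{\eps^{-2}k}$ descending from $u$ meets $X\cup H$ in at most $\eps^2k$ vertices, and the lemma follows. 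This is the paper's argument; no path-building is needed at all.

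As written, your plan has a genuine gap precisely at the step you flag as the ``technical crux''. With only $|H|\le(1-\eps)n$, there can be $\Theta(n)$ trapped vertices of out-degree one whose unique child is heavy, so averaging over $|V^*|=\Omega(n)$ starting points gives no control on the number of entries into $H$. The segment idea does not obviously rescue this: a light anchor $u$ with a single heavy child satisfies $|D_{\le(1-5\eps)k}(u)|\le(1-4\eps)k$, but this neither bounds how many of the next $\Theta(k)$ vertices on the descent lie in $H$, nor does leaving $D_{\le(1-5\eps)k}(u)$ force the next vertex to be light. The whole detour is an artefact of the weak bound on $|H|$; once you sharpen the double count as above, the difficulty disappears.
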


\begin{proof}
Let $\cS_2$ be the set of pairs $(u,v)$ where $u$ is an ancestor of $v$ and $0<d(u,v)\le (1-5\eps)k$.
Since a vertex has at most one ancestor at any given distance, we have $|\cS_2|\le
(1-5\eps)kn$. On the other hand, by Lemma~\ref{lpoor} all but $o(n)$ vertices $u$ are at height at least $k$ and so
appear in at least $(1-5\eps)k$ pairs $(u,v)\in \cS_2$. It follows that only $o(n)$ vertices
$u$ are in more than $(1-4\eps)k$ such pairs, i.e., $|H|=o(n)$.

Let $\cS_3$ denote the set of pairs $(u,v)$ where $v\in X\cup H$, $u$ is an ancestor of $v$, and $d(u,v)\le \eps^{-2}k$.
Since a given $v$ can only appear in $\eps^{-2}k$ pairs $(u,v)\in\cS_3$,
we see that $|\cS_3|\le \eps^{-2}k|X\cup H|=o(kn)$. Hence only $o(n)$ vertices $u$ appear in more than $\eps^2 k$ pairs
$(u,v)\in \cS_3$.

By Lemma~\ref{lpoor}, all but $o(n)$ vertices are at height at least $\eps^{-2}k$. Let $u$
be such a vertex appearing in at most $\eps^2k$ pairs $(u,v)\in \cS_3$,
and let $P$ be the vertical path from $u$ to some $v\in D_{\ceil{\eps^{-2}k}}(u)$. Then $P$ has the required
properties.
\end{proof}

\noindent
\emph{Proof of Theorem~\ref{th1}.}
Fix $0<\eps<1/10$. It suffices to show that whp $G_p=G_k[p]$ contains a cycle of length at least $(1-5\eps)k$, say.

Explore $G_p=G_k[p]$ by depth-first search as described above (i.e., as in~\cite{KLS}), writing
$T$ for the spanning forest revealed, and $U$ for the set of untested edges. Condition
on the result of the exploration, noting that the edges of $U$ are still present
independently with probability $p$.
By Lemma~\ref{lpath}, $uv\in U$ implies that $u$ is either an ancestor or a descendant of $v$.
As before, let $n=|G_k|=|T|$. By Corollary~\ref{cfull}, we may assume that all but $o(n)$ vertices are full, 
i.e., are incident with at least $(1-\eps)k$ edges of $U$.

Suppose that
\begin{equation}\label{long}
 \bigl|\{ u: uv\in U,\  d(u,v)\ge (1-5\eps)k \} \bigr| \ge \eps k
\end{equation}
for some vertex $v$. Then, since $\eps k p\to\infty$, testing the relevant edges $uv$ one-by-one, 
whp we find one present in $G_p$, forming, together with $T$, the required long cycle.
Suppose then that \eqref{long} fails for every $v$.

\begin{wrapfigure}{r}{0.6in}
 \begin{center}
 \includegraphics{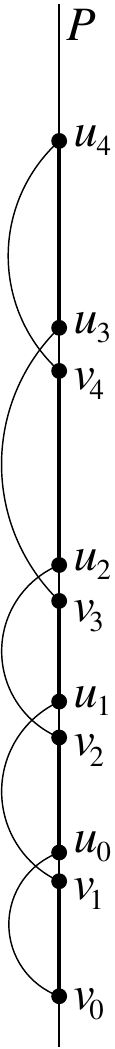}
\end{center}
% \caption{The path $P$.}\label{fig_1}
\end{wrapfigure}

Suppose that some vertex $v$ is full but poor. Since $v$ has at most $\eps k$ descendants,
there are at least $(1-2\eps)k$ pairs $uv\in U$ with $u\in A(v)$. Since $v$ has only one ancestor
at each distance, it follows that \eqref{long} holds for $v$, a contradiction.

We have shown that no poor vertex is full. Hence there are $o(n)$ poor vertices,
and we may apply Lemma~\ref{lmain}, with $X$ the set of vertices that are not full.
Let $P$ be the path whose existence is guaranteed by the lemma, and
let $Z$ be the set of vertices on $P$ that are full and light, so $|V(P)\setminus Z|\le \eps^2 k$.

For any $v\in Z$, since $v$ is full, there are at least $(1-\eps)k$ vertices $u\in A(v)\cup D(v)$
with $uv\in U$. Since \eqref{long} does not hold, at least $(1-2\eps)k$ of these vertices
satisfy $d(u,v)\le (1-5\eps)k$. Since $v$ is light, in turn at least $2\eps k$
of these $u$ must be in $A(v)$. Recalling that a vertex has at most one ancestor at each
distance, we find a set $U(v)$ of at least $\eps k$ vertices $u\in A(v)$ with $uv\in U$ and
 $\eps k\le d(u,v)\le (1-5\eps)k\le k$.

It is now easy to find a (very) long cycle whp. 
Recall that $Z\subset V(P)$ with $|V(P)\setminus Z|\le \eps^2 k$.
Thinking of $P$ as oriented upwards towards the root, let $v_0$ be the lowest vertex in $Z$.
Since $|U(v_0)|\ge \eps k$ and $kp\to\infty$, whp there is an edge $v_0u_0$ in $G_p$ with $u_0\in U(v_0)$.
Let $v_1$ be the first vertex below $u_0$ along $P$ with $v_1\in Z$. Note
that we go up at least $\eps k$ steps from $v_0$ to $u_0$ and down at most $1+|V(P)\setminus Z|\le 2\eps^2 k$
from $u_0$ to $v_1$, so $v_1$ is above $v_0$.
Again whp there is an edge $v_1u_1$ in $G_p$ with $u_1\in U(v_1)$, and so at least $\eps k$ steps
above $v_1$. Continue downwards from $u_1$ to the first $v_2\in Z$
below $u_1$, and so on. Since $\eps^{-1}=O(1)$,
whp we may continue in this way to find overlapping `chords' $v_iu_i$ for $0\le i\le \floor{10\eps^{-1}}$, say.
(Note that we remain within $P$ as each upwards step has length at most $k$.) These chords
combine with $P$ to form a cycle $C$, as shown in the figure. Since each chord $v_iu_i$ corresponds
to at least $\eps k$ steps up $P$, and each of the segments $v_iu_{i-1}$ of $P$ has
length at most $2\eps^2k$, we see that all of the $\floor{10\eps^{-1}}+1\ge 10\eps^{-1}$ segments of $P$ included
in $C$ have length at least $(\eps-4\eps^2)k\ge \eps k/2$, so $C$
has length at least $5k$.
\hfill\endproof

\end{document}